\documentclass[12pt]{article}
\usepackage{amsmath,amsthm,amssymb}
\usepackage{amssymb,latexsym}
\newtheorem{theorem}{Theorem}

\newtheorem{lemma}{Lemma}

\textheight=21.5cm
\textwidth=16cm
\hoffset=-1cm
\parindent=16pt

\begin{document}

\baselineskip=17pt

\title{\bf A binary additive equation with prime and square-free number}

\author{\bf S. I. Dimitrov}

\date{}

\maketitle
\begin{abstract}
Let $[\, \cdot\,]$ be the floor function.
In this paper, we show that when $1<c<\frac{82}{79}$, then every sufficiently large positive integer $N$ can be represented in the form   
\begin{equation*}
N=[p^c]+[m^c]\,,
\end{equation*}
where $p$ is a prime and $m$ is a square-free.\\
\quad\\
\textbf{Keywords}: Binary equation $\cdot$ Prime $\cdot$ Square-free \\
\quad\\
{\bf  2020 Math.\ Subject Classification}: 11L07 $\cdot$ 11L20 $\cdot$ 11P05 $\cdot$ 11P32
\end{abstract}

\section{Introduction and statement of the result}
\indent

In 1974 Deshouillers \cite{Deshouillers} proved that if $1 <c<\frac{4}{3}$, then for every sufficiently large    
positive integer $N$ the binary equation
\begin{equation}\label{binary1}
[m^c_1]+[m^c_2]=N\,,
\end{equation}
has a solution with $m_1$ and $m_2$ positive integers.
Subsequently, the range for $c$ in this result was extended by Gritsenko \cite{Gritsenko} to $1 <c<\frac{55}{41}$
and by Konyagin \cite{Konyagin} to $1 <c<\frac{3}{2}$.

Considered with primes, the equation \eqref{binary1} is considerably more difficult, probably at least as hard as the binary Goldbach problem.
The first step in this direction was made by Laporta \cite{Laporta}, which showed that if $1<c<\frac{17}{16}$,
then for almost all $N$  the equation \eqref{binary1} has a solution with primes $m_1$, $m_2$.
Afterwards W. Zhu \cite{WZhu}, L. Zhu \cite{LZhu} and Baker \cite{Baker} improved the result of Laporta in various ways.
In 2007 Balanzario, Garaev and Zuazua \cite{Balanzario} proved that when $1 <c<\frac{17}{11}$,
then for almost all positive integers $N$ the binary equation
\begin{equation}\label{binary2}
[p^c]+[m^c]=N\,,
\end{equation}
has a solution with prime $p$ and positive integer $m$.
In turn Kumchev \cite{Kumchev} showed that if $1<c<\frac{16}{15}$, then the equation \eqref{binary2} is solvable
for sufficiently large positive integer $N$. The result of Kumchev was sharpened by Yu \cite{Yu} with $1<c<\frac{11}{10}$.
In continuation of these studies Wu \cite{Wu} proved the solvability of the equation \eqref{binary2}
for $1 <c<\frac{247}{238}$ with an almost prime $m$ with at most $\left[\frac{450}{247-238c}\right]+1$ prime factors.
A weaker result was previously obtained by Petrov and Tolev \cite{Petrov-Tolev}.
On the other hand many diophantine equations are solved with square-free numbers.
For example it was first shown by Estermann \cite{Estermann} in 1931 that every sufﬁciently large
positive integer can be represented as the sum of a prime and a square-free number.
Subsequently, the result of Estermann was improved by Page \cite{Page} and Mirsky \cite{Mirsky}.
Finally Dudek \cite{Dudek} proved that every integer greater than two may be written as the
sum of a prime and a square-free number. In turn Li \cite{Li} sharpened the theorem of Dudek.
But many other similar articles can be found in literature.
Recently the author \cite{Dimitrov} showed that when $1<c<\frac{3849}{3334}$, then there exist infinitely many prime numbers of the form $[n^c]$, where $n$ is a square-free.
Motivated by all the mentioned results, we establish the following theorem.
\begin{theorem}\label{Theorem} Suppose that $1<c<\frac{82}{79}$.
Then for every sufficiently large positive integer $N$ the binary equation
\begin{equation*}
[p^c]+[m^c]=N
\end{equation*}
has a solution in prime number $p$ and square-free number $m$.
\end{theorem}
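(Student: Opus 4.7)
My plan is to show that the weighted representation sum
\[
\Gamma(N) \;:=\; \sum_{\substack{p\text{ prime},\, m\text{ squarefree}\\ [p^c]+[m^c]=N}}(\log p)\,\mu^2(m)
\]
satisfies $\Gamma(N)\gg N^{2\gamma-1}$, with $\gamma=1/c$, which is positive whenever $c<2$; hence $\Gamma(N)>0$ for $N$ sufficiently large and the theorem follows. Writing $\mu^2(m)=\sum_{d^2\mid m}\mu(d)$ and splitting the $d$-sum at a threshold $D$ to be optimised, one reduces to controlling, for each $d\leq D$, the inner sum
\[
\Gamma_d(N)=\sum_{[p^c]+[(d^2k)^c]=N}(\log p),
\]
and showing that the tail $\sum_{d>D}|\Gamma_d(N)|$ is negligible. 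The indicator of $[p^c]+[m^c]=N$ can be linearised either via the Fourier identity $\mathbf{1}\{a+b=N\}=\int_0^1 e(\alpha(a+b-N))\,d\alpha$, which leads to a circle-method treatment of the product $F(\alpha)G(\alpha)$ with $F(\alpha)=\sum_p(\log p)e(\alpha[p^c])$ and $G(\alpha)=\sum_m\mu^2(m)e(\alpha[m^c])$, or, more in the spirit of Piatetski--Shapiro arguments, by using
\[
\#\{m:[m^c]=n\}=\lfloor(n+1)^\gamma\rfloor-\lfloor n^\gamma\rfloor
\]
and applying Vaaler's approximation of $\psi(t)=t-\lfloor t\rfloor-1/2$ to separate each floor into a smooth piece and an oscillatory piece.

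The main term is then extracted from the smooth pieces: interchanging summations and invoking the prime number theorem together with $\sum_{d\geq 1}\mu(d)/d^2=6/\pi^2$ produces
\[
\mathrm{Main}(N)\;=\;\frac{6}{\pi^2}\,\gamma\sum_{p\leq N^\gamma}(\log p)(N-[p^c])^{\gamma-1}\bigl(1+o(1)\bigr)\;\gg\; N^{2\gamma-1}.
\]
The oscillatory pieces coming from the two Vaaler expansions give rise to exponential sums of the shape
\[
\sum_{p\sim P}(\log p)\,e\!\bigl(h(N-[p^c])^\gamma\bigr),\qquad \sum_{k\sim K}e\!\bigl(h(N-[(d^2k)^c])^\gamma\bigr),
\]
for $1\leq h\leq H$, with $H$ a truncation parameter. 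The prime sum is handled by a Heath--Brown (or Vaughan) combinatorial decomposition that reduces it to Type I and Type II bilinear forms, estimated by the van der Corput process with a suitable exponent pair; the squarefree sum, after the $\mu^2$ expansion, becomes essentially a Type I sum amenable to the $B$-process followed by exponent-pair bounds.

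The principal obstacle is the precise balancing of the parameters $D$, $H$ and the Type I/II cut-offs so that every error term falls strictly below the main term $N^{2\gamma-1}$. The presence of the $\mu^2$ factor forces an additional Möbius-weighted sum whose summand costs a power of $d$, thereby tightening the admissible range compared with Kumchev's classical threshold $c<16/15$ for the analogous equation $[p^c]+[m^c]=N$ without the squarefree restriction. The constant $82/79$ is what emerges after the arithmetic of the chosen exponent pair is pushed through all the error terms: it is the value of $c$ for which the total error is still of size $N^{2\gamma-1-\eta}$ with some $\eta>0$. Verifying this optimum is the delicate numerical core of the proof, although each individual exponential sum estimate is, by now, standard.
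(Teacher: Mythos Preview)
Your outline is essentially the paper's approach: expand $\mu^2(m)=\sum_{d^2\mid m}\mu(d)$, split at a threshold $D$, extract the main term $\asymp N^{2\gamma-1}$ from the smooth part of the floor count of $m$, and bound the $\psi$-pieces via Vaaler, a Vaughan decomposition of the prime sum, and exponent pairs. Two points in your narrative deserve correction, however, because they misidentify where the work actually lies.

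First, the square-free condition is \emph{not} what produces the threshold $82/79$. In the paper $D=\log N$, so the Möbius sum over $d\le D$ costs only logarithmic factors and the tail $\sum_{d>D}$ is dispatched by an elementary counting lemma giving $B(N)\ll N^{2\gamma-1}/(d\log N)$ for solutions with $d\mid m$; together these contribute $O(N^{2\gamma-1}/\log N)$, strictly smaller than the main term regardless of $c$. The constant $82/79$ comes solely from the Type~II estimate for the prime exponential sum, where the paper applies the exponent pair $BA(13/84+\varepsilon,\,55/84+\varepsilon)=(55/194+\varepsilon,\,110/194+\varepsilon)$ obtained from Bourgain's pair. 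Your remark that the $\mu^2$ factor ``tightens the admissible range compared with Kumchev's $16/15$'' is therefore misleading.

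Second, you omit a genuinely nontrivial step: after Vaaler one faces sums $\sum_p(\log p)\,e\bigl(v(N+j-[p^c])^\gamma\bigr)$ with the \emph{integer part} $[p^c]$ trapped inside the phase. The paper (following Petrov--Tolev) removes this by a smooth partition of unity in $\{p^c\}$, replacing $[p^c]$ by $p^c-z/(2Z)$ at the cost of an additional harmonic sum over $|r|\le R\asymp d^2N^{1-\gamma}\log^8 N$; only then does one have phases $r p^c+v(T-p^c)^\gamma$ amenable to Vaughan and exponent pairs. Your list of oscillatory sums also includes a spurious sum over $k$ (the $m$-variable has already been eliminated by the floor count), so only the prime sum needs treatment. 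Filling in these two points would bring your sketch in line with the paper's proof.
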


\section{Notations}
\indent

Let $N$ be a sufficiently large positive integer.
The letter $p$ will always denote prime number.
By $\varepsilon$ we denote an arbitrary small positive number, not the same in all appearances.
As usual $\mu (n)$, $\Lambda(n)$ and $\tau(n)$ denote respectively M\"{o}bius' function,
von Mangoldt's function and the number of positive divisors of $n$.
We shall use the convention that a congruence, $m\equiv n\,\pmod {d}$ will be written as $m\equiv n\,(d)$.
We denote by $[t]$ and $\{t\}$ the integer part of $t$ and the fractional part of $t$, respectively.
Moreover $\psi(t)=\{t\}-1/2$ and $e(x)=e^{2\pi i x}$.
Throughout this paper we suppose that $1<c<\frac{82}{79}$ and $\gamma=\frac{1}{c}$.

Denote
\begin{align}\label{P}
&P=10^{-9}N^\gamma\,;\\
\label{GammaP}
&\Gamma=\sum\limits_{P<p\leq 2P, m\in \mathbb{N}\atop{[p^c]+[m^c]=N}}\mu^2(m)\log p\,;\\
\label{z}
&D=\log N\,.
\end{align}

\section{Preliminary lemmas}
\indent

\begin{lemma}\label{Vaaler}
For every $H\geq1$, we have
\begin{equation*}
\psi(t)=\sum\limits_{1\leq|h|\leq H}a(h)e(ht)+\mathcal{O}\Bigg(\sum\limits_{|h|\leq H}b(h)e(ht)\Bigg)\,,
\end{equation*}
where
\begin{equation}\label{ahbh}
a(h)\ll\frac{1}{|h|}\,,\quad b(h)\ll\frac{1}{H}\,.
\end{equation}
\end{lemma}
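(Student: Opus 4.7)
The plan is to invoke the classical construction of Vaaler (1985), which produces trigonometric-polynomial approximations to the sawtooth function $\psi(t)$ together with a nonnegative majorant controlling the pointwise error, both having well-behaved Fourier coefficients. The starting point is the formal Fourier expansion
\begin{equation*}
\psi(t) = -\sum_{h\neq 0}\frac{e(ht)}{2\pi i h},
\end{equation*}
which converges conditionally for non-integer $t$ but gives no uniform control near integers; truncating it at $|h|\leq H$ therefore cannot directly produce the error bound stated in the lemma.

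To bypass this, I would import the Beurling--Selberg extremal functions: entire functions of exponential type $2\pi H$ sandwiching $\operatorname{sgn}(x)$ with minimal $L^1$ error of order $1/H$. Following Vaaler, an appropriate average of these extremals yields an entire function $V_H(x)$ of exponential type $2\pi H$ that approximates $\psi$, together with a nonnegative entire function $J_H(x)$ of the same type that dominates the pointwise error $|\psi-V_H|$. Both objects have Fourier transforms supported in $[-H,H]$, with coefficient bounds $\widehat{V_H}(\xi)\ll 1/|\xi|$ and $\widehat{J_H}(\xi)\ll 1/H$. Periodizing by Poisson summation,
\begin{equation*}
V^*(t)=\sum_{n\in\mathbb{Z}} V_H(t+n),\qquad J^*(t)=\sum_{n\in\mathbb{Z}} J_H(t+n),
\end{equation*}
the compact-support condition forces $V^*$ and $J^*$ to be trigonometric polynomials of degree at most $H$, and the pointwise sandwich inequality transfers to the circle. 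Defining $a(h):=\widehat{V_H}(h)$ and $b(h):=\widehat{J_H}(h)$ then produces exactly the decomposition asserted in the lemma, and the bounds in~\eqref{ahbh} are immediate from the Beurling--Selberg transform estimates.

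The main technical obstacle is the explicit construction of the Beurling--Selberg majorant and minorant for $\operatorname{sgn}(x)$, together with the sharp $L^1$ and Fourier-transform estimates; this is the delicate interpolation-theoretic step and is exactly what Vaaler carried out in his 1985 paper. Since Lemma~\ref{Vaaler} is now a standard tool in exponential-sum analysis, in practice one simply quotes it, and the role of the above sketch is only to indicate how the stated coefficient bounds emerge from the extremal-function machinery.
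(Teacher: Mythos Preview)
Your proposal is correct and aligns with the paper's treatment: the paper gives no argument at all and simply cites Vaaler's 1985 paper, which is exactly what you conclude one does in practice. Your sketch of the Beurling--Selberg construction and its periodization is accurate and more informative than the paper's bare citation, but the underlying approach is the same.
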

\begin{proof}
See \cite{Vaaler}.
\end{proof}

%
\begin{lemma}\label{PartitionofUnity1} Let $Z\ge 2$ be an integer and let $0\leq z\leq2Z-1$ be fixed.
Then there exists a periodic function $\theta_z(x)$ with period 1 such that
\begin{align*}
&(\mathrm{i}) \quad 0<\theta_z(x)<1 \quad \mbox{ if } \quad \left|x-\frac{z}{2Z}\right|<\frac{1}{2Z}\,;\\
&(\mathrm{ii}) \quad \theta_z(x)=0 \quad \mbox{ if } \quad \frac{1}{2Z}\leq\left|x-\frac{z}{2Z}\right|\leq\frac{1}{2}\,;\\
&(\mathrm{iii}) \;\; \mbox{ The Fourier expansion of } \theta_z(x) \mbox{ is of the form }
\end{align*}
\begin{equation}\label{Fourierexp}
\theta_z(x)=\sum\limits_{|n|\leq Z(\log N)^4}g_z(n)e(nx)+\mathcal{O}\Big(N^{-\log\log N}\Big)\,,
\end{equation}
where
\begin{equation}\label{gznZ}
\big|g_z(n)\big|\le\frac{1}{2Z}\,.
\end{equation}
We also have
\begin{equation}\label{PartitionofUnity2}
\sum\limits_{z=0}^{2Z-1}\theta_z(x)=1\,.
\end{equation}
\end{lemma}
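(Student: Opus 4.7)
The plan is to realize $\theta_z$ as a convolution $\chi_z\ast\phi$ on the torus $\mathbb{R}/\mathbb{Z}$, where $\chi_z$ is an indicator function chosen so that the partition-of-unity identity \eqref{PartitionofUnity2} is built in, and $\phi$ is a smooth non-negative bump of integral $1$ whose Fourier coefficients decay super-polynomially. This is the classical Vinogradov smoothing procedure, and everything except the Fourier tail estimate is essentially routine once the construction is in place.

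\smallskip

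\noindent\emph{Construction.} Let $\chi_z$ be the $1$-periodic extension of $\mathbf{1}_{[z/(2Z)-1/(4Z),\,z/(2Z)+1/(4Z)]}$. As $z$ runs over $0,1,\dots,2Z-1$ these intervals tile $\mathbb{R}/\mathbb{Z}$, so $\sum_z\chi_z\equiv 1$. Put $K=[(\log N)^{3}]$ and let $\phi=\phi_0^{\ast K}$ be the $K$-fold self-convolution of $\phi_0(x)=2KZ\cdot\mathbf{1}_{[-1/(4KZ),\,1/(4KZ)]}(x)$. Then $\phi\ge 0$, $\int\phi=1$, $\mathrm{supp}(\phi)\subset[-1/(4Z),1/(4Z)]$, and on the torus $\widehat{\phi}(n)=\bigl(\sin(\pi n/(2KZ))/(\pi n/(2KZ))\bigr)^{K}$. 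Define $\theta_z:=\chi_z\ast\phi$. Then (ii) and the sandwich $0\le\theta_z\le 1$ are immediate from $\mathrm{supp}(\chi_z\ast\phi)=\mathrm{supp}(\chi_z)+\mathrm{supp}(\phi)$ together with the non-negativity of $\chi_z,\phi$; the strict inequalities in (i) can be enforced by a cosmetic thickening of $\chi_z$ (for example, replacing it by a convolution of two slightly narrower indicators centered at $z/(2Z)$), a modification that preserves both the support and the partition property. Identity \eqref{PartitionofUnity2} then follows by linearity of convolution: $\sum_z\theta_z=\bigl(\sum_z\chi_z\bigr)\ast\phi=1\ast\phi=1$.

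\smallskip

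\noindent\emph{Fourier coefficients.} Since convolution on the circle diagonalizes in the Fourier basis, $g_z(n)=\widehat{\chi_z}(n)\widehat{\phi}(n)$, and the trivial estimates $|\widehat{\chi_z}(n)|\le\int\chi_z=1/(2Z)$ and $|\widehat{\phi}(n)|\le\int\phi=1$ give \eqref{gznZ} at once. The main obstacle is the truncation error in \eqref{Fourierexp}. For $|n|\ge 2KZ$ the standard sinc bound gives $|\widehat{\phi}(n)|\le\bigl(2KZ/(\pi|n|)\bigr)^{K}$, so for $|n|>Z(\log N)^{4}$ (a range that with $K=[(\log N)^{3}]$ certainly lies in $|n|\ge 2KZ$) we obtain $|\widehat{\phi}(n)|\ll(2/\log N)^{K}$. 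A direct tail summation then yields
\[
\Bigl|\sum_{|n|>Z(\log N)^{4}}g_z(n)e(nx)\Bigr|\ll\frac{1}{2Z}\sum_{|n|>Z(\log N)^{4}}\Bigl(\frac{2KZ}{|n|}\Bigr)^{K}\ll\exp\!\bigl(-\tfrac12(\log N)^{3}\log\log N\bigr)\ll N^{-\log\log N},
\]
which establishes \eqref{Fourierexp}. The only delicate step is thus calibrating the exponent $K$ so that the Fourier tail beats $N^{-\log\log N}$ while the truncation is held at the claimed level $Z(\log N)^{4}$; any $K$ growing faster than $\log N/\log\log N$ will do, and $K=[(\log N)^{3}]$ is a convenient choice.
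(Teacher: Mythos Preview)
Your construction is the classical Vinogradov ``little cups'' smoothing, which is exactly what the paper's cited sources (Karatsuba, Ch.~1, Lemma~A, and Petrov--Tolev, \S3.3) carry out; the paper itself gives no argument beyond those references, so you have supplied precisely the proof that lies behind the citation. The Fourier estimates are correct: with $K=[(\log N)^3]$ the sinc bound $(2KZ/\pi|n|)^K$ for $|n|>Z(\log N)^4$ decays like $\exp(-c(\log N)^3\log\log N)$, which swamps the polynomial factor $Z$ (in the application $Z\ll N$) and delivers the $N^{-\log\log N}$ tail.

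One small wobble: your suggested ``cosmetic thickening'' to force the strict inequality $\theta_z<1$ does not quite work as written, since narrowing the $\chi_z$ destroys the tiling $\sum_z\chi_z\equiv 1$. In fact your original $\theta_z$ already attains the value $1$ at $x=z/(2Z)$. The standard remedy is to take the $\chi_z$ themselves to be overlapping trapezoids (a convolution of an indicator of length $1/(2Z)$ with a short bump), which still partition unity; alternatively, note that the strict upper bound plays no role anywhere in the paper, so $0\le\theta_z\le1$ suffices.
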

\begin{proof}
See (\cite{Karatsuba}, Chapter 1, Lemma A and \cite{Petrov-Tolev}, Section 3.3).
\end{proof}

\begin{lemma}\label{Vaughanidentity}
Let $u, N, N_1\in\mathbb{R}$, $1<u\leq N<N_1$ and let $f(n)$ be an arbitrary function defined for $n\in\mathbb{N}$, $n\in(N, N_1]$. Then
\begin{equation*}
\sum_{N<n\le N_1}\Lambda(n)f(n)=S_1-S_2-S_3\,,
\end{equation*}
where

\begin{align*}
&S_1=\sum_{m\le u}\mu(m)\sum_{\frac{N}{m}<l\le\frac{N_1}{m}}(\log l)f(ml)\,,\\
&S_2=\sum_{m\le u^2}c(m)\sum_{\frac{N}{m}<l\le\frac{N_1}{m}}f(ml)\,,\\
&S_3=\mathop{\sum\sum}_{\substack{N<ml\le N_1 \\m>u,\,l>u}}a(m)\Lambda(l)f(ml)\,,
\end{align*}
where
\begin{equation*}
|a(m)|\leq\tau(m)\,,\quad |c(m)|\leq\log m\,.
\end{equation*}
\end{lemma}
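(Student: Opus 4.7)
The plan is to derive the decomposition by pure convolution algebra, starting from the two elementary Dirichlet-convolution identities $\Lambda=\mu\ast\log$ and $\log=\mathbf{1}\ast\Lambda$ (where $\mathbf{1}(n)\equiv 1$), together with M\"{o}bius inversion $\mu\ast\mathbf{1}=\delta$ (with $\delta(n)=1$ if $n=1$ and $0$ otherwise). The whole argument is pointwise on $n$; the conclusion for the weighted sums then follows by multiplying by $f(n)$ and summing over $N<n\le N_{1}$.

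The first step is to split both $\mu$ and $\Lambda$ at the cut-off $u$: set
\begin{equation*}
\mu_{1}(m)=\mu(m)\mathbf{1}_{m\le u},\qquad \mu_{2}=\mu-\mu_{1},\qquad \Lambda_{1}(l)=\Lambda(l)\mathbf{1}_{l\le u},\qquad \Lambda_{2}=\Lambda-\Lambda_{1}.
\end{equation*}
Starting from $\Lambda=(\mu_{1}+\mu_{2})\ast\log$ and substituting $\log=\mathbf{1}\ast(\Lambda_{1}+\Lambda_{2})$ into the $\mu_{2}$-piece yields
\begin{equation*}
\Lambda=\mu_{1}\ast\log+\mu_{2}\ast\mathbf{1}\ast\Lambda_{1}+\mu_{2}\ast\mathbf{1}\ast\Lambda_{2}.
\end{equation*}
I then rewrite the middle term using $\mu_{2}=\mu-\mu_{1}$ and $\mu\ast\mathbf{1}=\delta$, obtaining $\mu_{2}\ast\mathbf{1}\ast\Lambda_{1}=\Lambda_{1}-\mu_{1}\ast\mathbf{1}\ast\Lambda_{1}$. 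Since every $n$ in the summation range satisfies $n>N\ge u$, the function $\Lambda_{1}$ vanishes at $n$, so
\begin{equation*}
\Lambda(n)=(\mu_{1}\ast\log)(n)-(\mu_{1}\ast\mathbf{1}\ast\Lambda_{1})(n)+(\mu_{2}\ast\mathbf{1}\ast\Lambda_{2})(n).
\end{equation*}

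The final step is to identify these three convolutions, after weighting by $f$ and summing, with $S_{1}$, $-S_{2}$ and $-S_{3}$. The first one expands to $\sum_{m\le u}\mu(m)\sum_{ml=n}\log l$, giving exactly $S_{1}$. For the second, I group the inner pair $(m,l)$ with $m,l\le u$ into one variable $m':=ml$ of size $\le u^{2}$; the induced coefficient $c(m')=\sum_{ml=m',\,m\le u,\,l\le u}\mu(m)\Lambda(l)$ satisfies $|c(m')|\le\sum_{l\mid m'}\Lambda(l)=\log m'$, yielding $S_{2}$. For the third, I set $a(m):=-(\mu_{2}\ast\mathbf{1})(m)=-\sum_{d\mid m,\,d>u}\mu(d)$, which satisfies the trivial divisor bound $|a(m)|\le\tau(m)$ and is supported on $m>u$ (if $m\le u$ then $m$ has no divisor exceeding $u$ and $a(m)=0$). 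Since $\Lambda_{2}$ is supported on $l>u$, separating out the $l$-variable gives the last sum in the form $S_{3}$ as stated. The argument is essentially formal, so no step is genuinely difficult; the one point that requires care is the sign and range bookkeeping — namely verifying that the single minus sign absorbed into the definition of $a(m)$, together with the automatic restriction $m>u$ coming from the support of $\mu_{2}\ast\mathbf{1}$, matches the statement $\Lambda=S_{1}-S_{2}-S_{3}$ and the summation conditions $m>u,\,l>u$ in $S_{3}$.
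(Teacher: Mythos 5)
Your proof is correct. The decomposition $\Lambda=\mu_{1}\ast\log+\mu_{2}\ast\mathbf{1}\ast\Lambda_{1}+\mu_{2}\ast\mathbf{1}\ast\Lambda_{2}$, the simplification of the middle piece using $\mu\ast\mathbf{1}=\delta$ and the vanishing of $\Lambda_{1}$ on $n>u$, and the identification of the three resulting convolutions with $S_{1}$, $-S_{2}$, $-S_{3}$ are all carried out correctly; the bounds $|c(m')|\le\log m'$ and $|a(m)|\le\tau(m)$ follow as you say, and the observation that $\mu_{2}\ast\mathbf{1}$ is supported on $m>u$ is what produces the joint restriction in $S_{3}$. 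The paper itself offers no proof, only a citation to Vaughan, so there is nothing to compare against in detail; your argument is the standard pointwise Dirichlet-convolution derivation of Vaughan's identity (equivalent, after expanding coefficients, to Vaughan's original generating-function computation with $-\zeta'/\zeta$). One small remark worth making explicit: your $a(m)=-\sum_{d\mid m,\,d>u}\mu(d)$ differs from the more common choice $a(m)=\sum_{d\mid m,\,d\le u}\mu(d)$ only at $m=1$, and since $S_{3}$ is restricted to $m>u\ge 1$, the two agree everywhere that matters because $\sum_{d\mid m}\mu(d)=0$ for $m>1$.
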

\begin{proof}
See (\cite{Vaughan}).
\end{proof}

\begin{lemma}\label{Iwaniec-Kowalski}
For any complex numbers $a(n)$ we have
\begin{equation*}
\bigg|\sum_{a<n\le b}a(n)\bigg|^2
\leq\bigg(1+\frac{b-a}{Q}\bigg)\sum_{|q|\leq Q}\bigg(1-\frac{|q|}{Q}\bigg)\sum_{a<n,\, n+q\leq b}a(n+q)\overline{a(n)}\,,
\end{equation*}
where $Q$ is any positive integer.
\end{lemma}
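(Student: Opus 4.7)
The plan is to derive this inequality by a shift-and-square trick (sometimes called a Gallagher-type or van der Corput $B$-process argument): duplicate the sum $Q$ times via short integer shifts, apply Cauchy--Schwarz to the duplicated form, and expand the resulting square to produce a Fejér-type kernel on the correlations $\sum_n a(n+q)\overline{a(n)}$.

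First I would extend the sequence by setting $a(n)=0$ for $n\le a$ or $n>b$, so that $S:=\sum_{a<n\le b}a(n)=\sum_{n\in\mathbb{Z}}a(n)$. For each $m\in\mathbb{Z}$ introduce the partial shifted sum
\begin{equation*}
U(m)=\sum_{r=1}^{Q}a(m+r).
\end{equation*}
A simple double counting (each fixed $n\in(a,b]$ arises as $m+r$ for exactly $Q$ choices of $(m,r)$ with $r\in\{1,\dots,Q\}$) gives the key identity $\sum_{m\in\mathbb{Z}}U(m)=QS$. Moreover $U(m)=0$ unless $m\in[a-Q+1,b-1]$, so the effective number of $m$ is at most $b-a+Q-1\le b-a+Q$.

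Next, Cauchy--Schwarz applied to the identity $QS=\sum_m U(m)$ restricted to the effective range yields
\begin{equation*}
Q^{2}|S|^{2}\le (b-a+Q)\sum_{m\in\mathbb{Z}}|U(m)|^{2}.
\end{equation*}
Expanding and reindexing by $q=r-s$, $n=m+s$ gives
\begin{equation*}
\sum_{m}|U(m)|^{2}=\sum_{r,s=1}^{Q}\sum_{m}a(m+r)\overline{a(m+s)}=\sum_{|q|<Q}(Q-|q|)\sum_{n\in\mathbb{Z}}a(n+q)\overline{a(n)},
\end{equation*}
because for each $q$ with $|q|<Q$ there are exactly $Q-|q|$ admissible pairs $(r,s)\in\{1,\dots,Q\}^{2}$ with $r-s=q$. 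The support of $a$ forces the inner sum to run effectively over $a<n,\,n+q\le b$.

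Dividing by $Q^{2}$ and noticing that the factor $1-|q|/Q$ kills any contribution from $|q|=Q$ (so one may write $|q|\le Q$ freely) produces exactly
\begin{equation*}
|S|^{2}\le\Bigl(1+\tfrac{b-a}{Q}\Bigr)\sum_{|q|\le Q}\Bigl(1-\tfrac{|q|}{Q}\Bigr)\sum_{a<n,\,n+q\le b}a(n+q)\overline{a(n)}.
\end{equation*}
There is no genuine obstacle; the only delicate point is book-keeping the Fejér weight $Q-|q|$ correctly from the count of pairs $(r,s)$ of fixed difference, and verifying that the crude bound $b-a+Q-1\le b-a+Q$ is what is needed to match the stated factor $1+(b-a)/Q$.
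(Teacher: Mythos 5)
Your argument is the classical shift-and-square (van der Corput) proof, and for a positive integer $Q$ it is correct; this is almost certainly the mechanism behind Iwaniec--Kowalski, Lemma 8.17, which the paper simply cites.

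The gap is that the lemma is stated, and used in the paper (see \eqref{Q}, where $Q=N^{\frac{152\gamma}{249}-\frac{110}{249}}$), for real $Q\ge1$, while your argument implicitly requires $Q\in\mathbb{Z}$: the identity $\sum_m U(m)=QS$, the count of $Q-|q|$ pairs $(r,s)$ with fixed difference $q$, and the final division by $Q^{2}$ all rely on $\{1,\dots,Q\}$ having exactly $Q$ elements. For non-integer $Q$ every one of these steps produces $\lfloor Q\rfloor$ in place of $Q$, so the argument gives the inequality with $\lfloor Q\rfloor$ throughout; and since the correlation sums $\sum_n a(n+q)\overline{a(n)}$ are of indefinite sign, the $\lfloor Q\rfloor$-version does not imply the $Q$-version by any termwise comparison. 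Two standard repairs: pass to a continuous average, writing $QS=\int_{\mathbb{R}}\bigl(\sum_n a(n)\mathbf{1}_{[0,Q]}(n-t)\bigr)\,dt$, apply Cauchy--Schwarz over the interval of length $\le b-a+Q$ on which the integrand is supported, and use $\int\mathbf{1}_{[0,Q]}(n-t)\mathbf{1}_{[0,Q]}(m-t)\,dt=\max(0,Q-|n-m|)$ to produce the Fej\'er weight exactly; or keep discrete shifts but put the fractional weight $\{Q\}$ on the $(\lfloor Q\rfloor+1)$-st shift in $U(m)$, which restores $\sum_m U(m)=QS$ and gives $\sum_m|U(m)|^{2}\le\sum_{|q|\le Q}(Q-|q|)\sum_n a(n+q)\overline{a(n)}$, the diagonal weight being $\lfloor Q\rfloor+\{Q\}^{2}\le Q$ and the off-diagonal ones exactly $Q-|q|$. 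Either route yields the stated bound for all real $Q\ge1$; as written, your proof establishes it only for integer $Q$.
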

\begin{proof}
See (\cite{Iwaniec-Kowalski}, Lemma 8.17).
\end{proof}

\begin{lemma}\label{Exponentpairs}
Let $|f^{(k)}(u)|\asymp YX^{1-k}$  for $1\leq X<u<X_0\leq2X$ and $k\geq1$.\\
Then
\begin{equation*}
\bigg|\sum_{X<n\le X_0}e\big(f(n)\big)\bigg|\ll Y^\varkappa X^\lambda +Y^{-1},
\end{equation*}
where $(\varkappa, \lambda)$ is any exponent pair.
\end{lemma}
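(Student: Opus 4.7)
The plan is to prove the estimate by induction on the complexity of the exponent pair $(\varkappa,\lambda)$, following the classical van der Corput / Phillips theory that generates every exponent pair from the trivial pair $(0,1)$ via two standard transformations, the so-called $A$-process and $B$-process. It therefore suffices to verify that the stated bound holds for the trivial pair and is preserved under each of these two transformations.

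The base case $(\varkappa,\lambda)=(0,1)$ reduces to the trivial estimate $|S|\le X_0-X\ll X=Y^{0}X^{1}$, which agrees with the claim. For the $A$-process, which maps $(\varkappa,\lambda)\mapsto\bigl(\tfrac{\varkappa}{2\varkappa+2},\tfrac{\varkappa+\lambda+1}{2\varkappa+2}\bigr)$, I would apply Lemma \ref{Iwaniec-Kowalski} to $S=\sum_{X<n\le X_0}e(f(n))$ with a parameter $Q$ to be optimized, obtaining
\begin{equation*}
|S|^{2}\ll\frac{X^{2}}{Q}+\frac{X}{Q}\sum_{1\le q\le Q}\bigg|\sum_{n}e\bigl(f(n+q)-f(n)\bigr)\bigg|.
\end{equation*}
For each $q\neq 0$ the difference $g_{q}(u)=f(u+q)-f(u)$ satisfies $|g_{q}^{(k)}(u)|\asymp qYX^{-k}$ for all $k\ge 1$, so the inductive hypothesis applied to $g_{q}$ (with effective parameter $qY/X$ in place of $Y$) bounds the inner sum. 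Summing over $q$ and choosing $Q$ optimally yields the bound corresponding to the new pair.

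For the $B$-process, which sends $(\varkappa,\lambda)\mapsto(\lambda-\tfrac{1}{2},\varkappa+\tfrac{1}{2})$, I would combine truncated Poisson summation with the saddle-point method, writing
\begin{equation*}
\sum_{X<n\le X_0}e(f(n))=\sum_{m}\int_{X}^{X_0}e\bigl(f(u)-mu\bigr)\,du+(\text{boundary terms}),
\end{equation*}
and noting that the integrals are negligible unless $m\in[f'(X),f'(X_0)]$, an interval of length $\asymp Y$. Stationary-phase analysis replaces each surviving integral by $\bigl(f''(u_{m})\bigr)^{-1/2}e\bigl(\tilde f(m)\bigr)$, producing a dual sum of the shape $X^{1/2}Y^{-1/2}\sum_{m\in I}e\bigl(\tilde f(m)\bigr)$ plus an $O(Y^{-1})$ error, where the Legendre dual $\tilde f$ obeys $|\tilde f^{(k)}(m)|\asymp XY^{1-k}$. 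Applying the inductive hypothesis to the dual sum, with the roles of $X$ and $Y$ interchanged, yields exactly the bound for the transformed pair.

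The main obstacle is the $B$-process: the Poisson-plus-saddle-point analysis must be carried out uniformly across the entire admissible class of $f$, and the various non-stationary and endpoint contributions must be tracked carefully, since they give rise to the additive $Y^{-1}$ term in the conclusion and must not degrade under iteration. The $A$-process is essentially mechanical once Lemma \ref{Iwaniec-Kowalski} is in hand. Because the result is classical, I would ultimately reduce the detailed verification of the two processes to Graham and Kolesnik's monograph on van der Corput's method of exponential sums, where both transformations are established in full generality with explicit constants.
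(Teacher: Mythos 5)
Your sketch of the van der Corput--Phillips exponent-pair machinery (trivial pair base case, $A$-process via Lemma~\ref{Iwaniec-Kowalski}, $B$-process via Poisson summation and stationary phase) is accurate, and you correctly identify that the detailed uniform verification belongs in Graham and Kolesnik. This is exactly what the paper does: its proof of the lemma is simply a citation to \cite{Graham-Kolesnik}, Ch.~3, so your approach matches.
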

\begin{proof}
See (\cite{Graham-Kolesnik}, Ch. 3).
\end{proof}

\begin{lemma}\label{ExppairBourgain}
For every $\varepsilon > 0$, the pair $\Big(\frac{13}{84}+\varepsilon, \frac{55}{84}+\varepsilon\Big)$ is an exponent pair.
\end{lemma}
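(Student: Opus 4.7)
The plan is to invoke Bourgain's exponential-sum bound as a black box, since the lemma is merely his celebrated estimate rewritten in the language of exponent pairs. No independent proof would be carried out in the paper beyond a reference to Bourgain's 2017 decoupling paper on the Riemann zeta function, following the same ``See\ldots'' convention already used for Lemmas~\ref{Vaaler}--\ref{Exponentpairs}.

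For orientation, Bourgain's argument can be organized as follows. \textbf{(1)} By dyadic dissection, smooth partitions of unity, and partial summation, reduce the general hypothesis $|f^{(k)}(u)|\asymp YX^{1-k}$ to estimating a model exponential sum over a dyadic interval with a phase close to a logarithmic one of size $Y$. \textbf{(2)} Apply van der Corput's $A$-process sufficiently many times to convert the sum into a mean value of a short Dirichlet polynomial whose frequencies lie on a moment curve. \textbf{(3)} Invoke the Bourgain--Demeter $\ell^2$-decoupling inequality for the moment curve to bound this mean value, obtaining a genuine improvement over the classical Vinogradov mean-value theorem. \textbf{(4)} Optimize the dissection parameters to read off the exponents $13/84+\varepsilon$ and $55/84+\varepsilon$; the identity $55/84-13/84=1/2$ reflects that this pair is $B$-process invariant and corresponds, under the standard exponent-pair/zeta-bound dictionary, to Bourgain's estimate $\zeta(1/2+it)\ll_\varepsilon t^{13/84+\varepsilon}$ on the critical line.

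The main obstacle, and the reason the lemma is quoted rather than reproved, is step \textbf{(3)}: the $\ell^2$-decoupling inequality rests on deep harmonic-analytic machinery such as multilinear Kakeya estimates and induction on scales, which lies far outside the scope of the present paper. Since Lemma~\ref{ExppairBourgain} enters the sequel only as an off-the-shelf ingredient in exponential-sum estimates obtained via Lemma~\ref{Exponentpairs}, a citation to Bourgain's original paper is both natural and sufficient; reproducing any part of the decoupling proof here would not improve the bound $1<c<\tfrac{82}{79}$ in Theorem~\ref{Theorem}.
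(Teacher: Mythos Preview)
Your proposal is correct and matches the paper exactly: the paper's entire proof of Lemma~\ref{ExppairBourgain} is the single line ``See (\cite{Bourgain}, Theorem~6),'' precisely the citation-only approach you anticipated. Your additional orientation on the structure of Bourgain's argument is accurate but goes beyond what the paper provides.
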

\begin{proof}
See (\cite{Bourgain}, Theorem 6).
\end{proof}

\section{Beginning of the proof}
\indent

We consider the sum  $\Gamma$ defined by \eqref{GammaP}.
The theorem will be proved if we show that $\Gamma>0$.
Our first maneuvers are straightforward.
Using \eqref{GammaP}, \eqref{z} and the well-known identity
\begin{equation*}
\mu^2(n)=\sum_{d^2|n}\mu(d)
\end{equation*}
we write
\begin{align}\label{Gammadecopm}
\Gamma&=\sum\limits_{P<p\leq 2P, m\in \mathbb{N}\atop{[p^c]+[m^c]=N}}(\log p)\sum_{d^2|m}\mu(d)
=\Gamma_1+\Gamma_2\,,
\end{align}
where
\begin{align}
\label{Gamma1}
&\Gamma_1=\sum_{d\leq D}\mu(d)\sum_{P<p\leq 2P}\log p
\sum\limits_{m\in\mathbb{N}\atop{m\equiv 0\,(d^2)\atop{[p^c]+[m^c]=N}}}1\,,\\
\label{Gamma2}
&\Gamma_2=\sum_{d>D}\mu(d)\sum_{P<p\leq 2P}\log p
\sum\limits_{m\in\mathbb{N}\atop{m\equiv 0\,(d^2)\atop{[p^c]+[m^c]=N}}}1\,.
\end{align}
We shall estimate $\Gamma_1$ and $\Gamma_2$, respectively, in the sections \ref{SectionGamma1} and \ref{SectionGamma2}.
In section \ref{Sectionfinal} we shall finalize the proof of Theorem \ref{Theorem}.

\section{Estimation of $\mathbf{\Gamma_1}$}\label{SectionGamma1}
\indent

Our argument is a modification of Petrov-Tolev's \cite{Petrov-Tolev} and Wu's \cite{Wu} argument.
Using the identity 
\begin{equation*}
\sum_{a\leq m<b}1=[-a]-[-b]=b-a-\psi(-a)+\psi(-b)
\end{equation*}
we obtain
\begin{align}\label{innersum}
\sum\limits_{m\in\mathbb{N}\atop{m\equiv 0\,(d^2)\atop{[p^c]+[m^c]=N}}}1
&=\sum\limits_{m\in\mathbb{N}\atop{m\equiv 0\,(d^2)\atop{N-[p^c]\leq m^c<N+1-[p^c]}}}1
=\sum\limits_{\frac{1}{d^2}(N-[p^c])^\gamma\leq m<\frac{1}{d^2}(N+1-[p^c])^\gamma}1\nonumber\\
&=\frac{\big(N+1-[p^c]\big)^\gamma-\big(N-[p^c]\big)^\gamma}{d^2}-\psi\left(-\frac{1}{d^2}\big(N-[p^c]\big)^\gamma\right)\nonumber\\
&+\psi\left(-\frac{1}{d^2}\big(N+1-[p^c]\big)^\gamma\right)\,.
\end{align}
Now \eqref{Gamma1} and \eqref{innersum} give us
\begin{align}\label{Gamma1est1}
\Gamma_1&=\sum_{d\leq D}\frac{\mu(d)}{d^2}\sum_{P<p\leq 2P}\Big(\big(N+1-[p^c]\big)^\gamma-\big(N-[p^c]\big)^\gamma\Big)\log p\nonumber\\
&+\sum_{d\leq D}\mu(d)\sum_{P<p\leq 2P}\Bigg(\psi\left(-\frac{1}{d^2}\big(N+1-[p^c]\big)^\gamma\right)
-\psi\left(-\frac{1}{d^2}\big(N-[p^c]\big)^\gamma\right)\Bigg)\log p\nonumber\\
&=\Gamma_3-\Sigma_0+\Sigma_1\,,
\end{align}
where
\begin{align}
\label{Gamma3}
&\Gamma_3=\sum_{d\leq D}\frac{\mu(d)}{d^2}\sum_{P<p\leq 2P}\Big(\big(N+1-[p^c]\big)^\gamma-\big(N-[p^c]\big)^\gamma\Big)\log p\,,\\
\label{Sigmaj}
&\Sigma_j=\sum_{d\leq D}\mu(d)\sum_{P<p\leq 2P}\psi\left(-\frac{1}{d^2}\big(N+j-[p^c]\big)^\gamma\right)\log p\,, \quad j=0, 1\,.
\end{align}

\subsection{Lower bound for $\mathbf{\Gamma_3}$}
\indent

On the one hand 
\begin{equation}\label{Binomialseries}
\big(N+1-[p^c]\big)^\gamma-\big(N-[p^c]\big)^\gamma=\gamma\big(N-[p^c]\big)^{\gamma-1}+\mathcal{O}\left(N^{\gamma-2}\right)\,.
\end{equation}
On the other hand 
\begin{equation}\label{Gegenbauer}
\sum_{d\leq D}\frac{\mu(d)}{d^2}=\frac{6}{\pi^2}+\mathcal{O}\big(D^{-1}\big)\,.
\end{equation}
Now \eqref{P}, \eqref{Gamma3}, \eqref{Binomialseries}, \eqref{Gegenbauer} and Chebyshev's prime number theorem yield
\begin{equation}\label{Gamma3est}
\Gamma_3\gg N^{2\gamma-1}\,.
\end{equation}

\subsection{Estimation of $\mathbf{\Sigma_0}$ and $\mathbf{\Sigma_1}$}
\indent

Using \eqref{Sigmaj} and Lemma \ref{Vaaler} we get
\begin{equation}\label{Sigmajdecomp}
\Sigma_j=\Sigma'_j+\mathcal{O}\big(\Sigma''_j\big)\,,
\end{equation}
where
\begin{align}
\label{Sigma'j}
&\Sigma'_j=\sum_{d\leq D}\mu(d)\sum_{P<p\leq 2P}(\log p)\sum\limits_{1\leq|h|\leq H}a(h)e\left(-\frac{h}{d^2}\big(N+j-[p^c]\big)^\gamma\right)\,,\\
\label{Sigma''j}
&\Sigma''_j=\sum_{d\leq D}\mu(d)\sum_{P<p\leq 2P}(\log p)\sum\limits_{|h|\leq H}b(h)e\left(-\frac{h}{d^2}\big(N+j-[p^c]\big)^\gamma\right)\,.
\end{align}
Put
\begin{equation}\label{Wv}
W(v)=\sum_{P<p\leq 2P}(\log p)e\left(v\big(N+j-[p^c]\big)^\gamma\right)\,.
\end{equation}
First we consider the sum $\Sigma'_j$. From \eqref{ahbh}, \eqref{Sigma'j}, \eqref{Wv} and changing the order of summation, we deduce
\begin{equation}\label{Sigma'jest}
\Sigma'_j=\sum_{d\leq D}\mu(d)\sum\limits_{1\leq|h|\leq H}a(h)W\left(-\frac{h}{d^2}\right)
\ll\sum_{d\leq D}\sum\limits_{1\leq h\leq H}\frac{1}{h}\left|W\left(\frac{h}{d^2}\right)\right|\,.
\end{equation}
Next we consider the sum $\Sigma''_j$. By \eqref{ahbh}, \eqref{Sigma''j} and \eqref{Wv}, we derive
\begin{align}\label{Sigma''jest}
\Sigma''_j&\ll\sum_{d\leq D}\sum_{P<p\leq 2P}(\log p)\sum\limits_{|h|\leq H}b(h)e\left(-\frac{h}{d^2}\big(N+j-[p^c]\big)^\gamma\right)\nonumber\\
&=\sum_{d\leq D}\sum\limits_{|h|\leq H}b(h)W\left(-\frac{h}{d^2}\right)
\ll\sum_{d\leq D}\sum\limits_{|h|\leq H}\frac{1}{H}\left|W\left(\frac{h}{d^2}\right)\right|\nonumber\\
&\ll\sum_{d\leq D}\frac{1}{H}\big|W(0)\big|+\sum_{d\leq D}\sum\limits_{1\leq h\leq H}\frac{1}{H}\left|W\left(\frac{h}{d^2}\right)\right|\,.
\end{align}
Bearing in mind \eqref{P}, \eqref{Wv} and Chebyshev's prime number theorem, we obtain
\begin{equation}\label{W0est}
W(0)\asymp N^ \gamma\,.
\end{equation}
Set
\begin{equation}\label{HdN}
H=d^2N^{1-\gamma}\log N\,.
\end{equation}
Now \eqref{z}, \eqref{Sigmajdecomp}, \eqref{Sigma'jest} -- \eqref{HdN} imply
\begin{equation}\label{Sigmajest1}
\Sigma_j\ll \frac{N^{2\gamma-1}}{\log N}+\sum_{d\leq D}\sum\limits_{h\leq H}\frac{1}{h}\left|W\left(\frac{h}{d^2}\right)\right|\,, \quad j=0, 1\,.
\end{equation}

\subsubsection{Consideration of the sum $\mathbf{\textit{W(v)}}$}
\indent

Hence forth we assume that
\begin{equation}\label{vdh}
v=\frac{h}{d^2}\,, \quad \mbox{where} \quad 1\leq d \leq D\,, \quad 1\leq h \leq H\,. 
\end{equation}
Using \eqref{Wv} and Lemma \ref{PartitionofUnity1} with
\begin{equation}\label{ZdN}
Z\asymp d^2 N^{1-\gamma}\log^3N
\end{equation}
we write
\begin{equation}\label{Wvest1}
W(v)=\sum_{P<p\leq 2P}(\log p)e\left(v\big(N+j-[p^c]\big)^\gamma\right)\sum\limits_{z=0}^{2Z-1}\theta_z(p^c)=\sum\limits_{z=0}^{2Z-1}W_z(v)\,,
\end{equation}
where
\begin{equation}\label{Wzv}
W_z(v)=\sum_{P<p\leq 2P}(\log p)\theta_z(p^c)e\left(v\big(N+j-[p^c]\big)^\gamma\right)\,.
\end{equation}
Taking into account \eqref{P}, \eqref{ZdN}, \eqref{Wzv} and arguing as in \cite{Petrov-Tolev}, we find
\begin{equation}\label{W0vest}
W_0(v)\ll\frac{N^ {2\gamma-1}}{d^2\log^2N}
\end{equation}
and for $z\neq0$
\begin{equation}\label{Wzvest1}
W_z(v)=V_z(v)+\mathcal{O}\Bigg(\frac{vN^ {2\gamma-2}}{d^2\log^3N}\sum_{P<p\leq 2P}(\log p)\theta_z(p^c)\Bigg)\,,
\end{equation}
where
\begin{equation}\label{Vzv}
V_z(v)=\sum_{P<p\leq 2P}(\log p)\theta_z(p^c)e\Bigg(v\bigg(N+j-p^c+\frac{z}{2Z}\bigg)^\gamma\Bigg)\,.
\end{equation}
From \eqref{Wvest1}, \eqref{W0vest} and \eqref{Wzvest1}, we get
\begin{equation}\label{Wvest2}
W(v)=\sum\limits_{z=1}^{2Z-1}W_z(v)+W_0(v)=\sum\limits_{z=1}^{2Z-1}V_z(v)+\mathcal{O}\big(\Xi\big)+\mathcal{O}\Bigg(\frac{N^ {2\gamma-1}}{d^2\log^2N}\Bigg)\,,
\end{equation}
where
\begin{equation}\label{Xi}
\Xi=\frac{vN^ {2\gamma-2}}{d^2\log^3N}\sum_{P<p\leq 2P}(\log p)\sum\limits_{z=1}^{2Z-1}\theta_z(p^c)\,.
\end{equation}
Now \eqref{P}, \eqref{PartitionofUnity2}, \eqref{Xi} and Chebyshev's prime number theorem give us
\begin{equation*}
\Xi\ll\frac{vN^ {3\gamma-2}}{d^2\log^3N}
\end{equation*}
which together with \eqref{Wvest2}  yields
\begin{equation}\label{Wvest3}
W(v)=\sum\limits_{z=1}^{2Z-1}V_z(v)+\mathcal{O}\Bigg(\frac{vN^ {3\gamma-2}}{d^2\log^3N}\Bigg)+\mathcal{O}\Bigg(\frac{N^ {2\gamma-1}}{d^2\log^2N}\Bigg)\,.
\end{equation}
Furthermore \eqref{HdN}, \eqref{vdh} and \eqref{Wvest3} imply
\begin{equation}\label{Wvest4}
W(v)=\sum\limits_{z=1}^{2Z-1}V_z(v)+\mathcal{O}\Bigg(\frac{N^ {2\gamma-1}}{d^2\log^2N}\Bigg)\,.
\end{equation}
By \eqref{Fourierexp}, \eqref{gznZ}, \eqref{ZdN} and \eqref{Vzv}, we obtain
\begin{equation}\label{Vzvest}
V_z(v)\ll N^{-10}+\frac{1}{Z}\sum_{|r|\leq R}\sup_{T\in[N, N+2]}\big|U(T, r, v)\big|\,,
\end{equation}
where
\begin{equation}\label{UTrv}
U(T, r, v)=\sum_{P<p\leq 2P}(\log p)e\big(rp^c+v(T-p^c)^\gamma\big)\,,
\end{equation}
\begin{equation}\label{RdN}
R=d^2N^{1-\gamma}\log^8N\,.
\end{equation}

\subsubsection{Application of Vaughan's identity}
\indent

Combining \eqref{Sigmajest1}, \eqref{ZdN}, \eqref{Wvest4} and \eqref{Vzvest} we get
\begin{equation}\label{Sigmajest2}
|\Sigma_j|\ll \frac{N^{2\gamma-1}}{\log N}+\sum_{d\leq D}\sum\limits_{h\leq H}\frac{1}{h}\sum_{|r|\leq R}\sup_{T\in[N, N+2]}\big|U(T, r, v)\big|\,, \quad j=0, 1\,.
\end{equation}
Denote
\begin{equation}\label{htT}
h(t)=rt^c+v(T-t^c)^\gamma\,,
\end{equation}
\begin{equation}\label{fml}
f(m, l)=h(ml)=r(ml)^c+v\big(T-(ml)^c\big)^\gamma\,.
\end{equation}
It is clear that \eqref{UTrv} and \eqref{htT} lead to
\begin{equation}\label{UTrvLambdan}
U(T, r, v)=\sum_{P<n\leq 2P}\Lambda(n)e\big(h(n)\big)+\mathcal{O}\big(P^{1/2}\big)\,.
\end{equation}
Using \eqref{fml}, \eqref{UTrvLambdan} and Lemma \ref{Vaughanidentity} with parameters $u=P^{\frac{1}{3}}$, $N=P$, $N_1=2P$, we derive  
\begin{equation}\label{Ualphadecomp}
U(T, r, v)=U_1-U_2-U_3-U_4+\mathcal{O}\big(P^{1/2}\big)\,,
\end{equation}
where
\begin{align}
\label{U1}
&U_1=\sum_{m\le P^{1/3}}\mu(m)\sum_{P/m<l\le 2P/m}(\log l)e\big(f(m,l)\big)\,,\\
\label{U2}
&U_2=\sum_{m\le P^{1/3}}c(m)\sum_{P/m<l\le 2P/m}e\big(f(m,l)\big)\,,\\
\label{U3}
&U_3=\sum_{P^{1/3}<m\le P^{2/3}}c(m)\sum_{P/m<l\le 2P/m}e\big(f(m,l)\big)\,,\\
\label{U4}
&U_4= \mathop{\sum\sum}_{\substack{P<ml\le 2P \\m>P^{1/3},\,l>P^{1/3} }}a(m)\Lambda(l)e\big(f(m,l)\big)\,,
\end{align}
and where
\begin{equation*}
|c(m)|\leq\log m\,, \quad  |a(m)|\leq\tau(m)\,.
\end{equation*}
Bearing in mind \eqref{P}, \eqref{z}, \eqref{HdN}, \eqref{RdN}, \eqref{Sigmajest2} and \eqref{Ualphadecomp}, we deduce
\begin{equation}\label{Sigmajest3}
|\Sigma_j|\ll \frac{N^{2\gamma-1}}{\log N}+\sum_{i=1}^4\Omega_i\,, \quad j=0, 1\,,
\end{equation}
where
\begin{equation}\label{Omegai}
\Omega_i=\sum_{d\leq D}\sum\limits_{h\leq H}\frac{1}{h}\sum_{|r|\leq R}\sup_{T\in[N, N+2]}|U_i|\,, \quad i=1, 2, 3, 4\,.
\end{equation}

\subsubsection{Estimation of $\mathbf{\Omega_1}$, $\mathbf{\Omega_2}$, $\mathbf{\Omega_3}$ and $\mathbf{\Omega_4}$}
\indent

First we consider the sums $\Omega_1$ and $\Omega_2$. 
Using  \eqref{U1}, \eqref{U2}, \eqref{Omegai} and arguing as in \cite{Petrov-Tolev}, we obtain
\begin{equation}\label{Omega1est1}
\Omega_1\ll \frac{N^{2\gamma-1}}{\log N}
\end{equation}
and
\begin{equation}\label{Omega2est1}
\Omega_2\ll \frac{N^{2\gamma-1}}{\log N}\,.
\end{equation}
Next we consider the sum $\Omega_4$. From \eqref{U4}, we have
\begin{equation}\label{U4U5}
U_4\ll|U_5|\log N,
\end{equation}
where
\begin{equation}\label{U5}
U_5=\sum_{L<l\le 2L}b(l)\sum_{M<m\le 2M\atop{P/l<m\leq 2P/l}}a(m)e\big(f(m, l)\big)
\end{equation}
and where
\begin{equation}\label{ParU5}
a(m)\ll N^\varepsilon\,, \quad b(l)\ll N^\varepsilon\,, \quad P^{1/3}\ll M\ll P^{1/2}\ll L\ll P^{2/3}\,, \quad  ML\asymp P\,.
\end{equation}
Using \eqref{U4U5}, \eqref{U5}, \eqref{ParU5} and Cauchy's inequality, we find
\begin{equation}\label{U4est1}
|U_4|^2\ll N^\varepsilon L\sum_{L<l\le 2L}\bigg|\sum_{M_1<m\le M_2}a(m)e\big(f(m, l)\big)\bigg|^2,
\end{equation}
where
\begin{equation}\label{maxmin1}
M_1=\max{\bigg\{M,\frac{P}{l}\bigg\}},\quad
M_2=\min{\bigg\{2M, \frac{2P}{l}\bigg\}}\,.
\end{equation}
Now \eqref{ParU5} -- \eqref{maxmin1}  and Lemma \ref{Iwaniec-Kowalski} with $Q$ such that
\begin{equation}\label{QM}
1\leq Q\leq M
\end{equation}
give us
\begin{align}\label{U4est2}
|U_4|^2&\ll N^\varepsilon L  \sum_{L<l\le 2L}\frac{M}{Q}
\sum_{|q|\leq Q}\bigg(1-\frac{|q|}{Q}\bigg)
\sum_{M_1<m\le M_2\atop{M_1<m+q\le M_2}}a(m+q)\overline{a(m)}e\big(f(m+q,l)-f(m,l)\big)\nonumber\\
&\ll N^\varepsilon\Bigg(\frac{(LM)^2}{Q}+\frac{LM}{Q}\sum_{0<|q|\leq Q}
\sum_{M<m\le 2M\atop{M<m+q\le 2M}}\bigg|\sum_{L_1<l\le L_2}e\big(g(l)\big)\bigg|\Bigg)\nonumber\\
&\ll N^\varepsilon\Bigg(\frac{(LM)^2}{Q}+\frac{LM}{Q}\sum_{1\leq q\leq Q}
\sum_{M<m\le 2M-q}\bigg|\sum_{L_1<l\le L_2}e\big(g(l)\big)\bigg|\Bigg)\,,
\end{align}
where
\begin{equation}\label{maxmin2}
L_1=\max{\bigg\{L, \frac{P}{m}, \frac{P}{m+q}\bigg\}},\quad
L_2=\min{\bigg\{2L,\frac{2P}{m},\frac{2P}{m+q}\bigg\}}
\end{equation}
and
\begin{equation}\label{gl}
g(l)=f(m+q,l)-f(m,l)\,.
\end{equation}
Consider the function $g(l)$.
From \eqref{htT}, \eqref{fml} and \eqref{gl}, it follows
\begin{equation*}
g(l)=\int\limits_{m}^{m+q}f_t^\prime(t,l)\,dt=\int\limits_{m}^{m+q}lh^\prime(tl)\,dt.
\end{equation*}
Hence
\begin{equation}\label{gkl}
g^{(k)}(l)=\int\limits_{m}^{m+q}\Big(kt^{k-1}h^{(k)}(tl)+lt^{k}h^{(k+1)}(tl)\Big)dt\,, \quad k\geq1\,.
\end{equation}
Taking into account \eqref{htT} and \eqref{gkl}, we obtain
\begin{align}
\label{gklest1}
&g^{\prime}(l)=\int\limits_{m}^{m+q}\Big(rc^2(tl)^{c-1}+v(tl)^{c-1}\big(T-(tl)^c\big)^{\gamma-2}\big((tl)^c-cT\big)\Big)dt\,, \\
\label{gklest2}
&g^{(k)}(l)=\int\limits_{m}^{m+q} \Big(\Psi_1(t, l)+\Psi_2(t, l)\Big)\,dt\,, \quad k\geq2\,,
\end{align}
where
\begin{align}
\label{Psi1}
&\Psi_1(t, l)=r c^2(c-1)(c-2) \cdots (c-k+1)t^{c-1}l^{c-k}\,,\\
\label{Psi2}
&\Psi_2(t, l)=v(c-1)Tt^{c-1}l^{c-k}\big(T-(tl)^c\big)^{\gamma-k-1}\sum_{i=1}^k\phi_i(c) T^{k-i}(tl)^{(i-1)c}\,, 
\end{align}
where $\phi_i(x)\in \mathbb{Z}[x]$ are polynomials of degree $k-1$ for $k\geq 2$.
If $t\in[m, m+q]$, then
\begin{equation}\label{tlP}
tl\asymp P\,.
\end{equation}
By \eqref{P}, \eqref{Psi1}, \eqref{Psi2}, \eqref{tlP} and the condition $N\leq T\leq2N$, we get
\begin{equation}\label{Psi1Psi2est}
|\Psi_1(t, l)|\asymp |r| m^{k-1} N^{1-k\gamma}\,, \quad |\Psi_2(t, l)|\asymp v m^{k-1} N^{(1-k)\gamma}
\end{equation}
uniformly for $t\in[m, m+q]$.
On the one hand from  \eqref{gklest1},  \eqref{gklest2}  and \eqref{Psi1Psi2est} we conclude that there exists sufficiently small constant $\alpha_1>0$ such that if
$|r|\leq \alpha_1vN^{\gamma-1}$, then $|g^{(k)}(l)|\asymp qv m^{k-1} N^{(1-k)\gamma}$.
On the other hand from \eqref{gklest1},  \eqref{gklest2}  and \eqref{Psi1Psi2est}  it follows that there exists sufficiently large constant $A_1>0$ such that if
$|r|\geq A_1vN^{\gamma-1}$, then $|g^{(k)}(l)|\asymp q|r| m^{k-1} N^{1-k\gamma}$.
For our purpose, it is convenient to decompose the sum $\Omega_4$ into four parts, i.e. from \eqref{Omegai}, we write
\begin{equation}\label{Omega4decomp}
\Omega_4=\Omega^{(1)}_4+\Omega^{(2)}_4+\Omega^{(3)}_4+\Omega^{(4)}_4 \,,
\end{equation}
where
\begin{align}
\label{Omega41}
&\Omega^{(1)}_4=\sum_{d\leq D}\sum\limits_{h\leq H}\frac{1}{h}\sum_{A_1vN^{\gamma-1}\leq|r|\leq R}\sup_{T\in[N, N+2]}|U_4|\,,\\
\label{Omega42}
&\Omega^{(2)}_4=\sum_{d\leq D}\sum\limits_{h\leq H}\frac{1}{h}\sum_{\alpha_1vN^{\gamma-1}<r< A_1vN^{\gamma-1}}\sup_{T\in[N, N+2]}|U_4|\,,\\
\label{Omega43}
&\Omega^{(3)}_4=\sum_{d\leq D}\sum\limits_{h\leq H}\frac{1}{h}\sum_{|r|\leq \alpha_1vN^{\gamma-1}}\sup_{T\in[N, N+2]}|U_4|\,,\\
\label{Omega44}
&\Omega^{(4)}_4=\sum_{d\leq D}\sum\limits_{h\leq H}\frac{1}{h}\sum_{-A_1vN^{\gamma-1}<r< -\alpha_1vN^{\gamma-1}}\sup_{T\in[N, N+2]}|U_4|\,.
\end{align}

\textbf{Upper bound for $\mathbf{\Omega^{(1)}_4}$}

We recall that the constant $A_1$ is chosen in such a way, that if $|r|\geq A_1vN^{\gamma-1}$,
then uniformly for $l\in(L_1, L_2]$ we have 
\begin{equation}\label{gklasymp}
|g^{(k)}(l)|\asymp q|r| m^{k-1} N^{1-k\gamma}\asymp q|r| N^{1-\gamma} \big(Pm^{-1}\big)^{1-k}\,, \quad k\geq1\,.
\end{equation}
According to Lemma \ref{ExppairBourgain}, the pair
\begin{equation*}
BA\left(\frac{13}{84}+\varepsilon, \frac{55}{84}+\varepsilon\right)=\left(\frac{55}{194}+\varepsilon,\, \frac{110}{194}+\varepsilon\right)
\end{equation*}
is an exponent pair.
Using \eqref{P}, \eqref{maxmin2}, \eqref{gklasymp} and Lemma \ref{Exponentpairs} with exponent pair
\begin{equation*}
\left(\frac{55}{194}+\varepsilon,\,\frac{110}{194}+\varepsilon\right)
\end{equation*}
we derive
\begin{align}\label{sumL1L2}
\sum_{L_1<l\le L_2}e(g(l))&\ll \big( q|r| N^{1-\gamma}\big)^{\frac{55}{194}+\varepsilon}\big(Pm^{-1}\big)^{\frac{110}{194}+\varepsilon}+\big( q|r| N^{1-\gamma}\big)^{-1}\nonumber\\
&\ll q^{\frac{55}{194}+\varepsilon} |r|^{\frac{55}{194}+\varepsilon} m^{-\frac{55}{97}+\varepsilon} N^{\frac{55}{194}+\frac{55\gamma}{194}+\varepsilon}\,.
\end{align}
Now \eqref{ParU5}, \eqref{U4est2} and \eqref{sumL1L2} yield 
\begin{align*}
|U_4|^2&\ll N^\varepsilon\Bigg(\frac{(LM)^2}{Q}+\frac{LM}{Q}\sum_{1\leq q\leq Q}
\sum_{M<m\le 2M-q}q^\frac{55}{194} |r|^\frac{55}{194} m^{-\frac{55}{97}} N^{\frac{55}{194}+\frac{55\gamma}{194}}\Bigg)\nonumber\\
&\ll N^\varepsilon\bigg(N^{2\gamma}Q^{-1}+N^\gamma Q^{-1}\sum_{1\leq q\leq Q}
q^\frac{55}{194} |r|^\frac{55}{194} M^\frac{42}{97} N^{\frac{55}{194}+\frac{55\gamma}{194}}\bigg)\nonumber\\
&\ll N^\varepsilon\Big(N^{2\gamma}Q^{-1}+Q^\frac{55}{194} |r|^\frac{55}{194} N^{\frac{55}{194}+\frac{291\gamma}{194}}\Big)
\end{align*}
which together with \eqref{z}, \eqref{RdN} and  \eqref{Omega41} implies
\begin{align}\label{Omega41est1}
\Omega^{(1)}_4&\ll N^\varepsilon\sum_{d\leq D}\sum\limits_{h\leq H}\frac{1}{h}\sum_{|r|\leq d^2N^{1-\gamma}\log^8N}
\Big(N^\gamma Q^{-\frac{1}{2}}+Q^\frac{55}{388} |r|^\frac{55}{388} N^{\frac{55}{388}+\frac{291\gamma}{388}}\Big)\nonumber\\
&\ll N^\varepsilon\Big(D^3NQ^{-\frac{1}{2}}+Q^\frac{55}{388} D^\frac{637}{194} N^{\frac{249}{194}-\frac{38\gamma}{97}}\Big)\nonumber\\
&\ll N^\varepsilon\Big(NQ^{-\frac{1}{2}}+Q^\frac{55}{388} N^{\frac{249}{194}-\frac{38\gamma}{97}}\Big)\,.
\end{align}
Choosing  
\begin{equation}\label{Q}
Q= \Big[N^{\frac{152\gamma}{249}-\frac{110}{249}}\Big]
\end{equation}
we see that the condition \eqref{QM} holds. Bearing in mind \eqref{Omega41est1} and \eqref{Q}, we deduce
\begin{equation}\label{Omega41est2}
\Omega^{(1)}_4\ll \frac{N^{2\gamma-1}}{\log N}\,.
\end{equation}

\textbf{Upper bound for $\mathbf{\Omega^{(2)}_4}$}

From (\cite{Petrov-Tolev}, p. 55), \eqref{P}, \eqref{ParU5}, \eqref{U4est2} and \eqref{maxmin2}  we have
\begin{equation*}
U_4 \ll N^\varepsilon\Big(N^\gamma Q^{-\frac{1}{2}}+v^\frac{1}{4} Q^\frac{1}{4} N^\frac{7\gamma}{8}+v^{-\frac{1}{4}} Q^{-\frac{1}{4}} N^\frac{7\gamma}{8}
+v^\frac{1}{12} Q^\frac{1}{12} N^\frac{11\gamma}{12}+v^{-\frac{1}{12}} Q^{-\frac{1}{12}} N^\frac{23\gamma}{24}\Big)
\end{equation*}
which together with \eqref{z}, \eqref{HdN}, \eqref{vdh}, \eqref{RdN} and \eqref{Omega42} leads to 
\begin{align}\label{Omega42est1}
\Omega^{(2)}_4&\ll N^\varepsilon\sum_{d\leq D}\sum\limits_{h\leq H}\frac{1}{h}\sum_{r<A_1\log N}
\Bigg(N^\gamma Q^{-\frac{1}{2}}+\left(\frac{h}{d^2}\right)^\frac{1}{4} Q^\frac{1}{4} N^\frac{7\gamma}{8}+\left(\frac{h}{d^2}\right)^{-\frac{1}{4}} Q^{-\frac{1}{4}} N^\frac{7\gamma}{8}\nonumber\\
&\hspace{56mm}+\left(\frac{h}{d^2}\right)^\frac{1}{12} Q^\frac{1}{12} N^\frac{11\gamma}{12}+\left(\frac{h}{d^2}\right)^{-\frac{1}{12}} Q^{-\frac{1}{12}} N^\frac{23\gamma}{24}\Bigg)\nonumber\\
&\ll N^\varepsilon \Big(N^\gamma Q^{-\frac{1}{2}}+Q^\frac{1}{4} N^{\frac{1}{4}+\frac{5\gamma}{8}}+Q^{-\frac{1}{4}} N^\frac{7\gamma}{8}
+Q^\frac{1}{12} N^{\frac{1}{12}+\frac{5\gamma}{12}}+Q^{-\frac{1}{12}} N^\frac{23\gamma}{24}\Big)\,.
\end{align}
By \eqref{Q} and \eqref{Omega42est1} it follows
\begin{equation}\label{Omega42est2}
\Omega^{(2)}_4\ll \frac{N^{2\gamma-1}}{\log N}\,.
\end{equation}

\textbf{Upper bound for $\mathbf{\Omega^{(3)}_4}$ and $\mathbf{\Omega^{(4)}_4}$ }

Arguing as in (\cite{Petrov-Tolev}, p. 55) for the sums defined by \eqref{Omega43} and \eqref{Omega44}, we obtain
\begin{equation}\label{Omega43est1}
\Omega^{(3)}_4\ll \frac{N^{2\gamma-1}}{\log N}
\end{equation}
and
\begin{equation}\label{Omega44est1}
\Omega^{(4)}_4\ll \frac{N^{2\gamma-1}}{\log N}\,.
\end{equation}
Taking into account \eqref{Omega4decomp}, \eqref{Omega41est2}, \eqref{Omega42est2} , \eqref{Omega43est1} and \eqref{Omega44est1}, we establish 
\begin{equation}\label{Omega4est1}
\Omega_4\ll \frac{N^{2\gamma-1}}{\log N}\,.
\end{equation}
Working as for $\Omega_4$ we find
\begin{equation}\label{Omega3est1}
\Omega_3\ll \frac{N^{2\gamma-1}}{\log N}\,.
\end{equation}

\subsubsection{Upper bound for $\mathbf{\Sigma_0}$ and $\mathbf{\Sigma_1}$}
\indent

Summarizing  \eqref{Sigmajest3}, \eqref{Omega1est1}, \eqref{Omega2est1}, \eqref{Omega4est1}  and \eqref{Omega3est1}, we get
\begin{equation}\label{Sigmajest4}
|\Sigma_j|\ll \frac{N^{2\gamma-1}}{\log N}\,, \quad j=0, 1\,,
\end{equation}

\subsection{ Lower bound for $\mathbf{\Gamma_1}$}
\indent

Bearing in mind \eqref{Gamma1est1}, \eqref{Gamma3est} and \eqref{Sigmajest4}, we deduce
\begin{equation}\label{Gamma1est2}
\Gamma_1\gg N^{2\gamma-1}\,.
\end{equation}

\section{Estimation of $\mathbf{\Gamma_2}$}\label{SectionGamma2}
\indent

In this section we need a lemma that gives us information about the upper bound
of the number of solutions of the binary equation \eqref{binary2} with an additional condition imposed for $m$.
\begin{lemma}\label{Thenumberofsolutions}
Let $1<c<2$  and $N$ be a positive integer.
Then for the number of solutions $B(N)$ of the diophantine equation
\begin{equation*}
[p^c]+[m^c]=N
\end{equation*}
with prime $p$ and positive integer $m$ such that $m\equiv 0\,(d)$, we have
\begin{equation}\label{BNupperbound}
B(N)\ll\frac{N^{\frac{2}{c}-1}}{d\log N}\,.
\end{equation}
\end{lemma}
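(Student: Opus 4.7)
The plan is to bound $B(N)$ by relaxing to
$$B(N) \leq B^*(N) := \#\bigl\{(p,m):\, p \text{ prime},\ d\mid m,\ p^c+m^c\in [N, N+2)\bigr\},$$
which is valid since $[p^c]+[m^c]=N$ forces $\{p^c\}+\{m^c\}=p^c+m^c-N \in [0,2)$. Parameterising by $p$, the admissible $m$ lie in the interval $I_p = \bigl[(N-p^c)^{1/c}, (N+2-p^c)^{1/c}\bigr)$ of length $\ell_p \asymp (N-p^c)^{1/c-1}$ by the mean value theorem. The count of multiples of $d$ in $I_p$ may be written via the sawtooth as
$$\frac{\ell_p}{d} + \psi\!\Bigl(\tfrac{(N-p^c)^{1/c}}{d}\Bigr) - \psi\!\Bigl(\tfrac{(N+2-p^c)^{1/c}}{d}\Bigr) + O(1),$$
splitting the count into a smooth main piece and a sawtooth remainder.

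The main contribution $(1/d)\sum_{p\leq N^{1/c}}\ell_p$ is handled by partial summation combined with Chebyshev's bound $\pi(x)\ll x/\log x$, giving
$$\sum_{p\leq N^{1/c}}(N-p^c)^{1/c-1} \ll \frac{1}{\log N}\int_2^{N^{1/c}}(N-t^c)^{1/c-1}\,dt.$$
The substitution $u=t^c$ converts the integral into a scaled Beta function,
$$\frac{1}{c}\int_0^N(N-u)^{1/c-1}u^{1/c-1}\,du = \frac{1}{c}N^{2/c-1}\,B(1/c,1/c) \ll N^{2/c-1},$$
so the main contribution is $\ll N^{2/c-1}/(d\log N)$, matching the target.

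The principal obstacle is the sawtooth error. Summing the crude bound $\ell_p/d+1$ introduces $\pi(N^{1/c})\sim N^{1/c}/\log N$, which \emph{exceeds} the main term, so the ``$+1$'' must not be absorbed naively. Instead one applies Vaaler's approximation (Lemma \ref{Vaaler}) to the two $\psi$-terms, reducing the error to exponential sums of the form
$$\sum_{P<p\leq 2P} e\!\Bigl(\tfrac{h(N-p^c)^{1/c}}{d}\Bigr),\qquad 1\leq h\leq H,$$
summed dyadically over $P\leq N^{1/c}$. These prime-exponential sums are then attacked by Vaughan's identity (Lemma \ref{Vaughanidentity}) followed by the Bourgain exponent pair (Lemmas \ref{Exponentpairs}--\ref{ExppairBourgain}), exactly as in the treatment of $\Sigma_0$ and $\Sigma_1$ in Section \ref{SectionGamma1}; the range $1<c<82/79$ is precisely what makes this work. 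A careful dyadic accounting shows the total sawtooth contribution is also $\ll N^{2/c-1}/(d\log N)$, yielding the stated bound. The hard step is thus the equidistribution-of-primes estimate, which is the same technical core that drives the main theorem.
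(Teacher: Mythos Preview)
Your route is entirely different from the paper's, and it does not prove the lemma as stated.

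The paper's proof is short and uses no exponential-sum machinery at all. It writes
\[
B(N)=\int_0^1 S_1(t)S_2(t)e(-Nt)\,dt,\qquad S_1(t)=\sum_{p\le N^{1/c}}e(t[p^c]),\quad S_2(t)=\sum_{\substack{m\le N^{1/c}\\ d\mid m}}e(t[m^c]),
\]
integrates by parts against $e(-Nt)$ to extract a factor $1/N$, and bounds the remaining integral by a path-deformation argument in the $z=S_1(t)S_2(t)$ plane, using only the trivial estimates $|S_1|\ll N^{1/c}/\log N$ and $|S_2|\ll N^{1/c}/d$. This gives the bound for the entire range $1<c<2$.

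Your proposal, by contrast, feeds the sawtooth error back into the heavy apparatus of Section~\ref{SectionGamma1} (Vaaler, Vaughan, Bourgain exponent pair). Two concrete gaps result. First, you yourself restrict to $1<c<82/79$, so you do not prove the lemma in the generality claimed; the lemma is stated for $1<c<2$ and the paper's argument genuinely needs no restriction. Second, and more seriously, the Section~\ref{SectionGamma1} estimates are organised as a \emph{sum over $d\le D$} and never isolate a $d^{-1}$ saving for an individual $d$. After Vaaler with $H\asymp dN^{1-\gamma}$ and the partition of unity, the analogue of the $r$-sum in \eqref{Sigmajest2} has length $\asymp dN^{1-\gamma}$, so the resulting bound carries a factor $d$ (from the length of the $r$-range), not $d^{-1}$. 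Your sentence ``a careful dyadic accounting shows the total sawtooth contribution is also $\ll N^{2/c-1}/(d\log N)$'' is exactly the point where the argument breaks: nothing in the Section~\ref{SectionGamma1} mechanism produces that extra $1/d$. Since the application in Section~\ref{SectionGamma2} relies precisely on this $d$-decay to make $\sum_{d>D}d^{-2}$ converge, the gap is fatal.

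In short, the paper obtains the lemma by an elementary Fourier trick with only trivial input, while your proposal is both much heavier and insufficient for the stated range and the required $d$-dependence.
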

\begin{proof}
We have
\begin{equation}\label{BN}
B(N)=\sum\limits_{[p^c]+[m^c]=N\atop{m\equiv 0\,(d)}}1=\int\limits_{0}^{1} S_1(t)S_2(t) e(-Nt)\,dt\,,
\end{equation}
where
\begin{align}
\label{S1}
&S_1(t)=\sum\limits_{p\leq N^{1/c}} e(t [p^c])\,,\\
\label{S2}
&S_2(t)=\sum\limits_{m\leq N^{1/c}\atop{m\equiv 0\,(d)}} e(t [m^c])\,.
\end{align}
From \eqref{BN} -- \eqref{S2}, partial integration and the trivial estimations
\begin{equation}\label{S12trivest}
S_1(t)\ll \frac{N^{\frac{1}{c}}}{\log N} \,, \quad S_2(t)\ll \frac{N^{\frac{1}{c}}}{d}
\end{equation}
we deduce
\begin{align}\label{BNest}
B(N)&=-\frac{1}{2\pi i}\int\limits_{0}^{1}\frac{S_1(t)S_2(t)}{N}\,d\,e(-Nt)\nonumber\\
&=-\frac{S_1(t) S_2(t)e(-Nt)}{2\pi iN}\Bigg|_{0}^{1}
+\frac{1}{2\pi iN}\int\limits_{0}^{1}e(-Nt)\,d\Big(S_1(t)S_2(t)\Big)\nonumber\\
&\ll \frac{N^{\frac{2}{c}-1}}{d\log N}+N^{-1}|\Omega|\,,
\end{align}
where
\begin{equation}\label{Omega}
\Omega=\int\limits_{0}^{1}e(-Nt)\,d\Big(S_1(t)S_2(t)\Big)\,.
\end{equation}
We consider $\Omega$. Put
\begin{equation*}
f(t)=S_1(t)S_2(t)\,,\quad 0\leq t\leq1\,.
\end{equation*}
Since the set of zeros of Re$f'(t)$ is finite on $[0, 1]$, then the integral \eqref{Omega} can be divided into a finite number of integrals, 
such that the function $f(t)$ is univalent on the corresponding interval of integration. 
Assume that Re$f'(t)$ has $k$ zeros in $[0, 1]$. When Re$f'(t)$ has no zeros in $[0, 1]$ we work analogously. Let 
\begin{equation*}
\textmd{Re}f'(t_1)=\textmd{Re}f'(t_2)=\cdots=\textmd{Re}f'(t_k)=0\,, \quad \hbox{ where } \quad 0\leq t_1<t_2<\cdots<t_k\leq1
\end{equation*}
and
\begin{equation}\label{gammai}
\gamma_i \, :\, z=f(t)=S_1(t)S_2(t)\,,\quad \textmd{Re}f'(t)\neq0\,,\quad  t_i<t<t_{i+1}\,, \quad 0\leq i \leq k\,,
\end{equation}
where $t_0=0$, $t_{k+1}=1$. Using \eqref{Omega} and \eqref{gammai}, we obtain
\begin{equation}\label{Omegaest1}
\Omega=\sum\limits_{i=0}^k \int\limits_{\gamma_i} e\Big(-Nf^{-1}(z)\Big)\,dz\,.
\end{equation}
Taking into account \eqref{S12trivest}, \eqref{gammai} and that the integrals in \eqref{Omegaest1} are independent of path, we derive
\begin{equation}\label{Omegaest2}
\Omega=\sum\limits_{i=0}^k\int\limits_{\overline{\gamma}_i}e\Big(-Nf^{-1}(z)\Big)\,dz
\ll\sum\limits_{i=0}^k\int\limits_{\overline{\gamma}_i} |dz|
\ll \sum\limits_{i=0}^k\big(|f(t_i)|+|f(t_{i+1})|\big) \ll \frac{N^{\frac{2}{c}}}{d\log N}\,,
\end{equation}
where $\overline{\gamma}_i$ is the line segment connecting the points $f(t_i)$ and $f(t_{i+1})$.
Bearing in mind \eqref{BNest} and \eqref{Omegaest2}, we establish the upper bound \eqref{BNupperbound}.
\end{proof}
We are now in a good position to estimate the sum $\Gamma_2$.
From \eqref{Gamma2} and Lemma \ref{Thenumberofsolutions}, we get
\begin{equation}\label{Gamma2est}
\Gamma_2\ll\sum_{d>D}\frac{N^{2\gamma-1}}{d^2}\ll N^{2\gamma-1}D^{-1}\,.
\end{equation}

\section{The end of the proof}\label{Sectionfinal}
\indent

Summarizing \eqref{z}, \eqref{Gammadecopm}, \eqref{Gamma1est2} and \eqref{Gamma2est}, we establish the lower bound
\begin{equation*}
\Gamma\gg N^{2\gamma-1}\,.
\end{equation*}
This completes the proof of  Theorem \ref{Theorem}.

\vskip20pt
\footnotesize
\begin{flushleft}
S. I. Dimitrov\\
\quad\\
Faculty of Applied Mathematics and Informatics\\
Technical University of Sofia \\
Blvd. St.Kliment Ohridski 8 \\
Sofia 1000, Bulgaria\\
e-mail: sdimitrov@tu-sofia.bg\\
\end{flushleft}

\begin{flushleft}
Department of Bioinformatics and Mathematical Modelling\\
Institute of Biophysics and Biomedical Engineering\\
Bulgarian Academy of Sciences\\
Acad. G. Bonchev Str. Bl. 105, Sofia 1113, Bulgaria \\
e-mail: xyzstoyan@gmail.com\\
\end{flushleft}

\end{document}